\theoremstyle{plain}
\newtheorem{definition}{Definition}
\newtheorem{thm}[definition]{Theorem}
\newtheorem{lem}[definition]{Lemma}
\newtheorem{cor}[definition]{Corollary}
\newtheorem{rei}[definition]{Example}
\begin{document}
\title[An extension and a generalization of Dedekind's theorem]{An extension and a generalization of Dedekind's theorem}
\author[N. Yamaguchi]{Naoya Yamaguchi}
\date{\today}
\keywords{Dedekind's theorem; group determinant; group algebra.}
\subjclass[2010]{Primary 20C15; Secondary 15A15; 22D20.}

\maketitle

\begin{abstract}
For any given finite abelian group, we give factorizations of the group determinant in the group algebra of any subgroup. 
The factorizations are an extension of Dedekind's theorem. 
The extension leads to a generalization of Dedekind's theorem and a simple expression for inverse elements in the group algebra. 
\end{abstract}

\section{\bf{INTRODUCTION}}
In this paper, we give factorizations of the group determinant for any given finite abelian group $G$ in the group algebra of subgroups. 
The factorizations are an extension of Dedekind's theorem. 
The extension leads to a generalization of Dedekind's theorem and a simple expression for inverse elements in the group algebra.

The group determinant $\Theta(G)$ is the determinant of a matrix whose elements are independent variables $x_{g}$ corresponding to $g \in G$. 
Dedekind gave the following theorem about the irreducible factorization of the group determinant for any finite abelian group. 

\begin{thm}[Dedekind \cite{conrad1998origin}]\label{thm:1.1.1}
Let $G$ be a finite abelian group and $\widehat{G}$ the group of characters of $G$. 
Then we have 
$$
\Theta(G) = \prod_{\chi \in \widehat{G}} \sum_{g \in G} \chi (g) x_{g}. 
$$
\end{thm}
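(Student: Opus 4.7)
The plan is to interpret the group determinant representation-theoretically. Regarding the variables $x_{g}$ as formal indeterminates, I would consider the element $X = \sum_{g \in G} x_{g} g$ of the group algebra $\mathbb{C}(x_{h} : h \in G)[G]$ and the $\mathbb{C}(x_{h})$-linear map $L_{X} : a \mapsto Xa$ on this algebra. In the natural basis $\{g : g \in G\}$, the matrix of $L_{X}$ is (up to a standard reindexing) exactly the matrix whose determinant defines $\Theta(G)$, so it suffices to compute $\det L_{X}$ in any convenient basis.

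Since $G$ is abelian, the group algebra is semisimple and decomposes as a direct sum of one-dimensional $\chi$-isotypic components, one for each $\chi \in \widehat{G}$. Concretely, I would introduce the primitive idempotents
\[
e_{\chi} = \frac{1}{|G|} \sum_{g \in G} \chi(g^{-1}) g, \qquad \chi \in \widehat{G},
\]
and use the orthogonality relations for characters to verify that the $e_{\chi}$ form a basis of the group algebra and that $g \cdot e_{\chi} = \chi(g) e_{\chi}$ for every $g \in G$. Summing against the $x_{g}$ yields
\[
X \cdot e_{\chi} = \Bigl( \sum_{g \in G} \chi(g) x_{g} \Bigr) e_{\chi},
\]
so $L_{X}$ is diagonal in the basis $\{e_{\chi}\}_{\chi \in \widehat{G}}$ with eigenvalues $\sum_{g} \chi(g) x_{g}$. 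Since the determinant is basis-independent, this gives the desired factorization.

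The only real obstacle is bookkeeping: one must make sure that the matrix of $L_{X}$ in the natural basis truly agrees with the matrix defining $\Theta(G)$ (this hinges on how the rows and columns of the group-determinant matrix are indexed, i.e.\ on the convention $M_{g,h} = x_{gh^{-1}}$ or $x_{g^{-1}h}$), and to justify that the change of basis from $\{g\}$ to $\{e_{\chi}\}$ is legitimate over the field $\mathbb{C}(x_{h} : h \in G)$. Both points follow from the orthogonality relations, which guarantee that the transition matrix has nonzero constant determinant.
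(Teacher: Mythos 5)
Your argument is correct: it is the standard proof of Dedekind's theorem, diagonalizing left multiplication by $X=\sum_{g}x_{g}g$ on the group algebra via the primitive idempotents $e_{\chi}=\frac{1}{|G|}\sum_{g}\chi(g^{-1})g$, whose linear independence and eigenvector property follow from character orthogonality, and the computation $X\cdot h=\sum_{k}x_{kh^{-1}}k$ confirms that the matrix of $L_{X}$ in the natural basis is exactly $(x_{gh^{-1}})_{g,h}$. Note that the paper itself states this theorem without proof, citing Conrad, so there is no internal argument to compare against; your route is the expected one and is complete once the bookkeeping you flag is carried out.
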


Frobenius gave the following theorem about the irreducible factorization of the group determinant for any finite group; 
thus, Frobenius gave a generalization of Dedekind's theorem.

\begin{thm}[Frobenius \cite{conrad1998origin}]\label{thm:1.1.2}
Let $G$ be a finite group and $\widehat{G}$ a complete set of irreducible representations of $G$ over $\mathbb{C}$. 
Then we have 
$$
\Theta(G) = \prod_{\varphi \in \widehat{G}} \det{\left( \sum_{g \in G} \varphi (g) x_{g} \right)^{\deg{\varphi}}}. 
$$
\end{thm}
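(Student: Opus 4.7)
The plan is to reinterpret $\Theta(G)$ as the determinant of left multiplication by a generic element of the group algebra, and then to factor that determinant using the Wedderburn--Artin decomposition of $\mathbb{C}[G]$. Concretely, let $A = \mathbb{C}[x_{g} : g \in G]$ and consider the generic element $X = \sum_{g \in G} x_{g} g \in A[G]$. With respect to an ordering $g_{1}, \ldots, g_{n}$ of $G$, the matrix of the left-multiplication map $L_{X} \colon A[G] \to A[G]$, $y \mapsto Xy$, coincides (up to a permutation of rows that does not change the determinant) with the matrix defining $\Theta(G)$. Hence $\Theta(G) = \det(L_{X})$, and the task reduces to computing this determinant.

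Next, I would invoke the Wedderburn--Artin theorem to get an isomorphism $\mathbb{C}[G] \cong \bigoplus_{\varphi \in \widehat{G}} M_{d_{\varphi}}(\mathbb{C})$ of $\mathbb{C}$-algebras, where $d_{\varphi} = \deg{\varphi}$. Tensoring with the flat $\mathbb{C}$-algebra $A$ yields $A[G] \cong \bigoplus_{\varphi} M_{d_{\varphi}}(A)$, and under this identification the generic element $X$ is sent to the tuple $\bigl( \sum_{g \in G} \varphi(g) x_{g} \bigr)_{\varphi}$. Because the isomorphism is $A$-linear, $\det(L_{X})$ equals the product over $\varphi \in \widehat{G}$ of $\det(L_{M_{\varphi}})$, where $M_{\varphi} = \sum_{g \in G} \varphi(g) x_{g}$ acts on $M_{d_{\varphi}}(A)$ by left multiplication.

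The final step is a general observation: for $M \in M_{d}(A)$ acting by left multiplication on $M_{d}(A) \cong A^{d^{2}}$, one decomposes $M_{d}(A) = \bigoplus_{j=1}^{d} C_{j}$ by columns. Each $C_{j} \cong A^{d}$ is stable under $L_{M}$, which acts on it as $M$, so $\det(L_{M}) = \det(M)^{d}$. Combining the three steps delivers the claimed factorization $\Theta(G) = \prod_{\varphi} \det\bigl( \sum_{g \in G} \varphi(g) x_{g} \bigr)^{\deg{\varphi}}$. The main obstacle is the careful treatment of the Wedderburn isomorphism after base change: one must verify that extending scalars to $A$ respects the natural identification sending $g \in G$ to $(\varphi(g))_{\varphi}$, so that the generic element $X$ corresponds exactly to the tuple $(M_{\varphi})_{\varphi}$. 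Everything else is essentially the bookkeeping of linear algebra.
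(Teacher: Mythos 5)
The paper does not prove this statement: Theorem~\ref{thm:1.1.2} is quoted as a classical result of Frobenius with a citation to \cite{conrad1998origin}, and no argument is given anywhere in the text. So there is nothing to compare against; your proposal must stand on its own, and it does. It is the standard modern proof: identify $\Theta(G)$ with $\det(L_{X})$ for the generic element $X=\sum_{g}x_{g}g$, pass through the Artin--Wedderburn isomorphism $\mathbb{C}[G]\cong\bigoplus_{\varphi}M_{d_{\varphi}}(\mathbb{C})$ (base-changed to $A$), and use that left multiplication by $M$ on $M_{d}(A)$ preserves each column and hence has determinant $\det(M)^{d}$. All three steps are sound; the isomorphism $g\mapsto(\varphi(g))_{\varphi}$ is the usual one coming from the decomposition of the regular representation, and extending scalars to the polynomial ring is harmless. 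Two small remarks. First, your parenthetical ``up to a permutation of rows that does not change the determinant'' is both unnecessary and slightly dangerous: a row permutation can flip the sign of a determinant, so if one were really needed you would have to check its parity. In fact none is needed, since $L_{X}(h)=\sum_{g}x_{g}gh=\sum_{k}x_{kh^{-1}}k$ shows the matrix of $L_{X}$ in the basis $G$ is literally $(x_{kh^{-1}})_{k,h}$. Second, this argument is genuinely different in spirit from the elementary manipulations the paper uses for its own results (which all stay at the level of degree-one characters and the operators $T_{\chi}$); your route buys the full non-abelian statement at the cost of invoking semisimplicity of $\mathbb{C}[G]$, which is entirely appropriate for Frobenius's theorem.
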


The main results of this paper are an extension and a generalization of Dedekind's theorem that are different from Frobenius' theorem.

\subsection{Main results}
We give an extension and a generalization of Dedekind's theorem.

Let $G$ be a finite abelian group, $\mathbb{C} G$ the group algebra of $G$ over $\mathbb{C}$, 
$\mathbb{C}[x_{g}] = \mathbb{C}\left[x_{g}; g \in G \right]$ the polynomial ring in $\left\{ x_{g} \: \vert \: g \in G \right\}$ with coefficients in $\mathbb{C}$, 
$
\mathbb{C}[x_{g}] G 
= \mathbb{C}[x_{g}] \otimes \mathbb{C}G 
= \left\{ \sum_{g \in G} A_{g} g \: \vert \: A_{g} \in \mathbb{C}[x_{g}] \right\}
$ 
the group algebra of $G$ over $\mathbb{C}[x_{g}]$, 
$H$ a subgroup of $G$, 
and $[G : H]$ the index of $H$ in $G$. 
Then we have the following theorem that is an extension of Dedekind's theorem. 

\begin{thm}\label{thm:1.1.3}
Let $G$ be a finite abelian group, $e$ the unit element of $G$, 
$H$ a subgroup of $G$, 
and $\widehat{H}$ the dual group of $H$. 
For every $h \in H$, there exists a homogeneous polynomial $A_{h} \in \mathbb{C}[x_{g}]$ such that $\deg{A_{h}} = [G : H]$ and 
$$
\Theta(G) e = \prod_{\chi \in \widehat{H}} \sum_{h \in H} \chi(h) A_{h} h. 
$$
If $H = G$, we can take $A_{h} = x_{h}$ for each $h \in H$. 
\end{thm}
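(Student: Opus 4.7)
The plan is to deduce the theorem from Dedekind's theorem for $G$ (Theorem~\ref{thm:1.1.1}) by grouping the linear factors of $\Theta(G)$ according to their restrictions to $H$ and then using Fourier inversion on $H$ to upgrade the resulting polynomial identity to one in the group algebra.

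The restriction map $\widehat G\to\widehat H$ is surjective with kernel canonically identified with $\widehat{G/H}$. Fixing a lift $\tilde\chi\in\widehat G$ of each $\chi\in\widehat H$ therefore partitions $\widehat G$ into the fibres $\tilde\chi\cdot\widehat{G/H}$, each of size $[G:H]$. Setting $L_\varphi:=\sum_{g\in G}\varphi(g)x_g$ and
$$
M_\chi:=\prod_{\eta\in\widehat{G/H}}L_{\tilde\chi\eta}\in\mathbb{C}[x_g],
$$
each $M_\chi$ is homogeneous of degree $[G:H]$, and Theorem~\ref{thm:1.1.1} gives $\prod_{\chi\in\widehat H}M_\chi=\Theta(G)$. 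I would then define
$$
A_h:=\frac{1}{|H|}\sum_{\chi\in\widehat H}\overline{\chi(h)}\,M_\chi\in\mathbb{C}[x_g],
$$
which is homogeneous of degree $[G:H]$, and orthogonality of characters of $H$ yields $\sum_{h\in H}\chi(h)A_h=M_\chi$ for every $\chi\in\widehat H$.

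For the final step, I would use that an element $y=\sum_h c_h h\in\mathbb{C}[x_g]H$ is determined by its character values $\widehat\psi(y):=\sum_h c_h\psi(h)$ as $\psi$ ranges over $\widehat H$; in particular $y=\Theta(G)e$ iff $\widehat\psi(y)=\Theta(G)$ for every $\psi$. Evaluating $R:=\prod_{\chi\in\widehat H}\bigl(\sum_{h\in H}\chi(h)A_h\,h\bigr)$ at $\psi$ gives
$$
\widehat\psi(R)=\prod_{\chi\in\widehat H}\sum_{h\in H}(\chi\psi)(h)A_h=\prod_{\chi\in\widehat H}M_{\chi\psi}=\prod_{\chi\in\widehat H}M_\chi=\Theta(G),
$$
because $\chi\mapsto\chi\psi$ permutes $\widehat H$. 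Hence $R=\Theta(G)e$. For $H=G$ we have $[G:G]=1$ and $M_\chi=L_\chi$, so orthogonality recovers $A_h=x_h$. The main obstacle is precisely this last upgrade from the polynomial equality $\prod_\chi M_\chi=\Theta(G)$ to the group-algebra equality $R=\Theta(G)e$; the decisive input is the $\psi$-invariance of $\prod_\chi M_{\chi\psi}$, which is exactly what kills every basis component of $R$ except $e$.
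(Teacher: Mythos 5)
Your proposal is correct, and it reaches the theorem by a genuinely different route from the paper's. The paper works inside the group algebra $\mathbb{C}[x_{g}]G$ from the start: it introduces the twisting operators $T_{\chi}\bigl(\sum_{g}A_{g}g\bigr)=\sum_{g}\chi(g)A_{g}g$, proves an invariance criterion (Lemma~\ref{lem:3.2.2}: $\alpha$ is fixed by every $T_{\chi}$ with $\chi\vert_{H}=1$ iff $\alpha\in\mathbb{C}[x_{g}]H$), uses it first to establish the lifted identity $\prod_{\chi\in\widehat{G}}\sum_{g}\chi(g)x_{g}g=\Theta(G)e$ and then to show that the partial product over $\widehat{G}_{H}$ lies in $\mathbb{C}[x_{g}]H$, and finally applies the $T_{\chi_{i}}$ for coset representatives. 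You instead stay at the polynomial level until the very end: you group the linear factors of Dedekind's factorization by the fibres of the restriction map $\widehat{G}\to\widehat{H}$ (combinatorially the same decomposition as the paper's coset decomposition $\widehat{G}=\chi_{1}\widehat{G}_{H}\sqcup\cdots\sqcup\chi_{k}\widehat{G}_{H}$), construct the $A_{h}$ explicitly by Fourier inversion on $H$, and then verify the group-algebra identity by applying every character $\psi$ of $H$ and invoking injectivity of the Fourier transform on $\mathbb{C}[x_{g}]H$ --- which does exactly the job of the paper's Lemma~\ref{lem:3.2.2}, both ultimately resting on the fact that characters of an abelian group separate points. Your route buys an explicit closed formula $A_{h}=\frac{1}{|H|}\sum_{\chi\in\widehat{H}}\overline{\chi(h)}M_{\chi}$ (these coincide with the paper's $A_{h}$, since both satisfy $\sum_{h}\chi(h)A_{h}=M_{\chi}$ for all $\chi$ and the Fourier transform is injective; one can even check the coefficient of $x_{h}x_{e}^{[G:H]-1}$ in $A_{h}$ equals $[G:H]\neq 0$, so the degree claim is genuinely attained, a point neither argument makes explicit). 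The paper's route buys the reusable operator formalism and the intermediate identity $\prod_{\chi\in\widehat{G}}\sum_{g}\chi(g)x_{g}g=\Theta(G)e$, which it also needs for Corollary~\ref{cor:1.1.5}. The two facts you quote without proof --- surjectivity of restriction $\widehat{G}\to\widehat{H}$ with kernel identified with $\widehat{G/H}$ --- are standard for finite abelian groups and are precisely what the paper's Lemmas~\ref{lem:3.1.1} and~\ref{lem:3.2.3} supply, so nothing is missing.
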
 

Note that the equality in Theorem~$\ref{thm:1.1.3}$ is the equality in $\mathbb{C}[x_{g}] H$. 
Theorem~$\ref{thm:1.1.3}$ leads to the following theorem.

\begin{thm}\label{thm:1.1.4}
Let $G$ be a finite abelian group and $H$ a subgroup of $G$. 
For every $h \in H$, there exists a homogeneous polynomial $A_{h} \in \mathbb{C}[x_{g}]$ such that $\deg{A_{h}} = \left[G : H \right]$ and 
$$
\Theta(G) = \prod_{\chi \in \widehat{H}} \sum_{h \in H} \chi(h) A_{h}. 
$$
If $H = G$, we can take $A_{h} = x_{h}$ for each $h \in H$. 
\end{thm}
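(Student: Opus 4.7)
The plan is to deduce Theorem~\ref{thm:1.1.4} from Theorem~\ref{thm:1.1.3} by applying the augmentation map of the group algebra. Concretely, I would define the $\mathbb{C}[x_{g}]$-algebra homomorphism
$$
\varepsilon \colon \mathbb{C}[x_{g}] H \longrightarrow \mathbb{C}[x_{g}], \qquad \varepsilon\!\left( \sum_{h \in H} B_{h} h \right) = \sum_{h \in H} B_{h},
$$
which is the standard augmentation sending each $h \in H$ to $1$. This is an algebra homomorphism because $\mathbb{C}[x_{g}] H$ is the group algebra of $H$ over the commutative ring $\mathbb{C}[x_{g}]$.

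Next I would apply $\varepsilon$ to both sides of the identity supplied by Theorem~\ref{thm:1.1.3}. On the left-hand side, $\varepsilon(\Theta(G) e) = \Theta(G)$ since $\varepsilon(e) = 1$. On the right-hand side, using that $\varepsilon$ is multiplicative and that $\chi(h) A_{h} \in \mathbb{C}[x_{g}]$,
$$
\varepsilon\!\left( \prod_{\chi \in \widehat{H}} \sum_{h \in H} \chi(h) A_{h} h \right) = \prod_{\chi \in \widehat{H}} \sum_{h \in H} \chi(h) A_{h}.
$$
Equating the two images yields the claimed factorization of $\Theta(G)$ in $\mathbb{C}[x_{g}]$.

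The homogeneity and the degree condition $\deg A_{h} = [G:H]$ are inherited verbatim from Theorem~\ref{thm:1.1.3}, as are the values $A_{h} = x_{h}$ in the case $H = G$. Thus no new estimates or polynomial constructions are needed here; all of the analytic content sits in Theorem~\ref{thm:1.1.3}.

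There is essentially no serious obstacle: once Theorem~\ref{thm:1.1.3} is granted, the only point to verify carefully is that the augmentation is a well-defined $\mathbb{C}[x_{g}]$-algebra homomorphism, so that it commutes with the product $\prod_{\chi \in \widehat{H}}$ appearing on the right. The only subtlety worth a remark is that the product in Theorem~\ref{thm:1.1.3} lives in the (commutative, since $H$ is abelian) algebra $\mathbb{C}[x_{g}] H$, so the order of factors is immaterial and $\varepsilon$ may be applied factor by factor without issue.
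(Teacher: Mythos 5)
Your proof is correct and is essentially identical to the paper's: the augmentation map $\varepsilon$ you define is exactly the paper's ``fundamental $\mathbb{C}[x_{g}]G$-function'' $F$ (Definition~\ref{def:3.3.2}), and the paper likewise deduces Theorem~\ref{thm:1.1.4} by applying $F$ to the identity of Theorem~\ref{thm:1.1.3}. No further comment is needed.
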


Theorem~$\ref{thm:1.1.4}$ is a generalization of Dedekind's theorem. 
In fact, let $H = G$ and $A_{h} = x_{h}$. Then we have Dedekind's theorem.

Moreover, we obtain the following formula for inverse elements in the group algebra $\mathbb{C}G$ from Theorem~$\ref{thm:1.1.3}$.  
However, only now the situation is that $x_{g}$ is a complex number for any $g \in G$. 
Hence, we assume that $\sum_{g \in G} x_{g} g \in \mathbb{C}G$ and $\Theta(G) = \det{(x_{gh^{-1}})_{g, h \in G}} \in \mathbb{C}$. 

\begin{cor}\label{cor:1.1.5}
Let $G$ be a finite abelian group, $\chi_{1}$ the trivial representation of $G$, and $\sum_{g \in G} x_{g}g \in \mathbb{C}G$ such that $\Theta(G) \neq 0$. 
Accordingly, we have 
$$
\left( \sum_{g \in G} x_{g} g \right)^{-1} = \frac{1}{\Theta(G)} \prod_{\chi \in \widehat{G} \setminus \{ \chi_{1} \} } \left( \sum_{g \in G} \chi (g) x_{g} g \right). 
$$
\end{cor}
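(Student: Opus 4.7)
The plan is to deduce Corollary~\ref{cor:1.1.5} directly from Theorem~\ref{thm:1.1.3} by specializing to the case $H = G$ and isolating the factor corresponding to the trivial character.

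First, I would apply Theorem~\ref{thm:1.1.3} with $H = G$. By the final sentence of that theorem we may choose $A_{h} = x_{h}$ for every $h \in G$, so the identity
$$
\Theta(G) e = \prod_{\chi \in \widehat{G}} \sum_{g \in G} \chi(g) x_{g} g
$$
holds in $\mathbb{C}[x_{g}] G$. Since the statement to be proved lives in $\mathbb{C}G$ under the assumption that the $x_{g}$ are complex numbers, I would specialize this identity by viewing $x_{g} \in \mathbb{C}$; the equality then becomes an identity in $\mathbb{C}G$.

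Next, I would split off the factor indexed by the trivial character $\chi_{1}$. Because $\chi_{1}(g) = 1$ for every $g \in G$, this factor is exactly $\sum_{g \in G} x_{g} g$. Using the fact that $\mathbb{C}G$ is commutative (since $G$ is abelian), the remaining factors may be grouped arbitrarily, yielding
$$
\Theta(G) e = \left( \sum_{g \in G} x_{g} g \right) \prod_{\chi \in \widehat{G} \setminus \{ \chi_{1} \}} \sum_{g \in G} \chi(g) x_{g} g.
$$

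Finally, under the hypothesis $\Theta(G) \neq 0$, I would divide both sides by the nonzero scalar $\Theta(G)$ and read off the right-hand side as an inverse to $\sum_{g \in G} x_{g} g$ in $\mathbb{C}G$. The only subtle point — essentially the sole obstacle — is justifying the reordering and isolation of the $\chi_{1}$-factor, but this is immediate from commutativity of $\mathbb{C}G$ for abelian $G$; the rest is a direct substitution.
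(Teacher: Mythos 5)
Your proposal is correct and matches the paper's (implicit) argument: the paper derives this corollary from the identity $\prod_{\chi \in \widehat{G}} \sum_{g \in G} \chi(g) x_{g} g = \Theta(G) e$, which is exactly Theorem~\ref{thm:1.1.3} with $H = G$ and $A_{h} = x_{h}$, followed by isolating the trivial-character factor and dividing by $\Theta(G)$. No gaps.
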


\section{\bf{Irreducible factorization of group determinant}}
In this section, we recall the definition of the group determinant and its irreducible factorization.

\subsection{Irreducible factorization of group determinant}
Let $G$ be a finite group and $\left\{ x_{g} \: \vert \: g \in G \right\}$ independent commuting variables. 
Below, we define the group determinant $\Theta(G)$ of $G$.

\begin{definition}\label{def:2.1.1}
The group determinant $\Theta(G)$ of $G$ is given by 
$$
\Theta(G) = \det{\left( x_{g h^{-1}} \right)}_{g, h \in G}
$$
where we give a numbering to the element of $G$. 
\end{definition}

Namely, the group determinant $\Theta(G)$ is a homogeneous polynomial of degree $|G|$ in $\{ x_{g} \: \vert \: g \in G \}$, 
where $|G|$ is the order of $G$.

In general, the matrix ${\left( x_{g h^{-1}} \right)}_{g, h \in G}$ is a covariant under change of a numbering to the element of $G$. 
However, the group determinant $\Theta(G)$ is an invariant.

\begin{rei}[]\label{rei:2.1.2}
Let $G = \mathbb{Z} / 3 \mathbb{Z} = \{ 0, 1, 2 \}$. 
Then we have 
\begin{align*}
\Theta(G) &= \det{ 
\begin{bmatrix} 
x_{0} & x_{2} & x_{1} \\ 
x_{1} & x_{0} & x_{2} \\ 
x_{2} & x_{1} & x_{0}
\end{bmatrix}
}. 
\end{align*}
\end{rei}

Dedekind proved the following theorem about the irreducible factorization of the group determinant for any finite group. 
\begin{thm}[Dedekind \cite{conrad1998origin}]\label{thm:2.1.3}
Let $G$ be a finite group and $\widehat{G}$ the group of characters of $G$. 
Then we have 
$$
\Theta(G) = \prod_{\chi \in \widehat{G}} \sum_{g \in G} \chi (g) x_{g}. 
$$
\end{thm}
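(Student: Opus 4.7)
The plan is to diagonalize the matrix $X = (x_{gh^{-1}})_{g,h \in G}$ by conjugating with the character table of $G$, using that for an abelian group the irreducible characters form a basis of the class functions and separate elements of $G$.

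First I would recognize $X$ as the matrix of left multiplication by the generic element $a = \sum_{g \in G} x_{g} g$ acting on the regular representation $\mathbb{C}G$ expressed in the basis $\{g : g \in G\}$, so that $\Theta(G) = \det X$. Then I would introduce the character matrix $M = (\chi(g))_{\chi \in \widehat{G},\, g \in G}$, which by the second orthogonality relation satisfies $M M^{*} = |G|\, I$ and is therefore invertible.

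Next I would compute the $(\chi,\psi)$ entry of $M X M^{-1}$. Writing $M^{-1} = \tfrac{1}{|G|} M^{*}$ and making the substitution $k = gh^{-1}$ inside
\[
(MXM^{-1})_{\chi,\psi} \;=\; \frac{1}{|G|} \sum_{g,h \in G} \chi(g)\, x_{gh^{-1}}\, \overline{\psi(h)},
\]
the double sum splits as
\[
\left(\sum_{k \in G} \chi(k)\, x_{k}\right) \cdot \frac{1}{|G|} \sum_{h \in G} \chi(h)\overline{\psi(h)} \;=\; \delta_{\chi,\psi} \sum_{k \in G} \chi(k)\, x_{k}
\]
by the first orthogonality relation. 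Hence $M X M^{-1}$ is diagonal with entries $\sum_{g} \chi(g)\, x_{g}$, and taking determinants yields the claimed product.

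The main obstacle is purely bookkeeping rather than conceptual: tracking the normalization in $M^{-1}$ and justifying the index substitution carefully so that the character orthogonality decouples the two sums cleanly. Once $M$ is known to be invertible and the entries of $M X M^{-1}$ are computed, the factorization is immediate. As an aside, a completely parallel derivation is available via the primitive idempotents $e_{\chi} = \tfrac{1}{|G|}\sum_{g} \overline{\chi(g)}\, g \in \mathbb{C}G$, which satisfy $a \cdot e_{\chi} = \bigl(\sum_{g} \chi(g) x_{g}\bigr) e_{\chi}$; this is the same calculation expressed representation-theoretically, and either route settles the theorem.
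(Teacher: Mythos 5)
Your proof is correct, and it is the standard diagonalization argument: identify $X=(x_{gh^{-1}})_{g,h}$ as the matrix of left multiplication by $\sum_{g}x_{g}g$ on the regular representation, conjugate by the character matrix $M=(\chi(g))_{\chi,g}$, and read off the eigenvalues $\sum_{g}\chi(g)x_{g}$. The paper itself gives no proof of this statement --- it is quoted as a classical result of Dedekind with a citation to Conrad, and is then used as an input to Theorem 3.3.3 --- so there is no internal argument to compare against; your write-up supplies the missing classical proof correctly. Two small points worth tightening. First, the substitution $k=gh^{-1}$ uses $\chi(g)=\chi(k)\chi(h)$, i.e.\ that $\chi$ is multiplicative, which is fine for degree-one characters, but the whole argument (and indeed the truth of the statement) requires $G$ abelian so that $|\widehat{G}|=|G|$ and $M$ is a square invertible matrix; the paper's Theorem 2.1.3 says ``finite group'' where Theorem 1.1.1 correctly says ``finite abelian group,'' and you should state the abelian hypothesis explicitly rather than leave it implicit in the phrase about characters forming a basis. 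Second, the relation $MM^{*}=|G|\,I$ and the relation $\frac{1}{|G|}\sum_{h}\chi(h)\overline{\psi(h)}=\delta_{\chi,\psi}$ that you invoke at the end are the same (row) orthogonality relation, so labelling one as the ``second'' and the other as the ``first'' orthogonality relation is a harmless but avoidable inconsistency. Neither point affects the validity of the argument, and your closing remark about the idempotents $e_{\chi}$ is the correct representation-theoretic restatement of the same computation.
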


\begin{rei}[]\label{rei:2.1.4}
Let $G = \mathbb{Z} / 3 \mathbb{Z} = \{ 0, 1, 2 \}$. 
Then we have 
\begin{align*}
\Theta(G) &= \det{ 
\begin{bmatrix} 
x_{0} & x_{2} & x_{1} \\ 
x_{1} & x_{0} & x_{2} \\ 
x_{2} & x_{1} & x_{0}
\end{bmatrix} 
} \\ 
&= (x_{0} + x_{1} + x_{2}) (x_{0} + x_{1} \omega + x_{2} \omega^{2} ) (x_{0} + x_{1} \omega^{2} + x_{2} \omega) 
\end{align*}
where $\omega$ is a primitive third root of unity. 
\end{rei}

\section{\bf{An extension and a generalization of Dedekind's theorem}}
In this section, we give an extension and a generalization of Dedekind's theorem.

\subsection{Degree one representations}
In this subsection, we describe two lemmas needed later.

Let $G$ be a finite group, $\overline{G}$ the set of degree one representations, $H$ a subgroup of $G$ and
$$
\overline{G}_{H} = \left\{ \chi \in \overline{G} \: \vert \: \chi(h) = 1, h \in H \right\}. 
$$
Then, $\overline{G}_{H}$ is a subgroup of $\overline{G}$.

Let $\widehat{G}$ be a complete set of irreducible representations of $G$. 
If $G$ is an abelian group, since the degree of irreducible representations of $G$ is one, we have $\overline{G} = \widehat{G}$.

The following lemmas are well known.

\begin{lem}\label{lem:3.1.1}
Let $G$ be a finite group and $H$ a normal subgroup of $H$ such that $G/H$ is an abelian group. 
Then we have 
$$
\overline{G}_{H} = \left\{ \varphi \circ \pi \: \vert \: \varphi \in \widehat{G/H} \right\}
$$
where $\pi : G \rightarrow G/H$ is a natural projection. 
\end{lem}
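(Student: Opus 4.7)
The plan is to prove set equality by showing both inclusions, relying only on the universal property of the quotient $G/H$.

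For the inclusion $\supseteq$, I would take any $\varphi \in \widehat{G/H}$. Since $G/H$ is abelian, every irreducible representation has degree one, so $\varphi$ is a homomorphism $G/H \to \mathbb{C}^{\times}$. Composing with the projection $\pi$ gives a homomorphism $\varphi \circ \pi \colon G \to \mathbb{C}^{\times}$, hence a degree one representation of $G$. For any $h \in H$ we have $\pi(h) = eH$, so $(\varphi \circ \pi)(h) = \varphi(eH) = 1$, which places $\varphi \circ \pi$ in $\overline{G}_{H}$.

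For the inclusion $\subseteq$, I would take $\chi \in \overline{G}_{H}$ and define $\varphi \colon G/H \to \mathbb{C}^{\times}$ by $\varphi(gH) = \chi(g)$. The key point is well-definedness: if $gH = g'H$ then $g^{-1}g' \in H$, so $\chi(g^{-1}g') = 1$, which forces $\chi(g) = \chi(g')$ since $\chi$ is a homomorphism and values lie in $\mathbb{C}^{\times}$. The map $\varphi$ is then a homomorphism because $\chi$ is, so it is a degree one representation of the abelian group $G/H$, i.e., $\varphi \in \widehat{G/H}$. By construction $\chi = \varphi \circ \pi$, completing the inclusion.

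There is no real obstacle here; the only subtle ingredient is the well-definedness of $\varphi$ on cosets, which reduces immediately to the hypothesis $\chi|_{H} \equiv 1$. The normality of $H$ is used implicitly in order for $G/H$ to be a group at all, and the abelian hypothesis on $G/H$ ensures that $\widehat{G/H}$ consists of degree one representations, so that the composition $\varphi \circ \pi$ is automatically in $\overline{G}$ rather than merely in the set of (possibly higher-degree) irreducible representations of $G$.
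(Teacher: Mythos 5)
Your proof is correct and follows the same route as the paper: the easy inclusion $\supseteq$ by composing with $\pi$, and the reverse inclusion by defining $\varphi(gH)=\chi(g)$ and checking well-definedness via $\chi|_H\equiv 1$. You have simply spelled out the details that the paper labels ``easy to see.''
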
 
\begin{proof}
Clearly, $\left\{ \varphi \circ \pi \: \vert \: \varphi \in \widehat{G/H} \right\} \subset \overline{G}_{H}$. 
We show that $\overline{G}_{H} \subset \left\{ \varphi \circ \pi \: \vert \: \varphi \in \widehat{G/H} \right\}$. 
Let $\chi \in \overline{G}_{H}$. We define the map $\varphi : G/H \rightarrow \mathbb{C}^{\times}$ by $\varphi(gH) = \chi(g)$. 
It is easy to see that $\varphi$ is well defined and $\chi = \varphi \circ \pi$. 
This completes the proof. 
\end{proof}

\begin{lem}\label{lem:3.1.2}
Let $G$ be a finite abelian group, and suppose that $g \in G$ is not the unit element of $G$. 
Then, there exists $\chi \in \widehat{G}$ such that $\chi(g) \neq 1$. 
\end{lem}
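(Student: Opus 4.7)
The plan is to construct a character $\chi \in \widehat{G}$ with $\chi(g) \neq 1$ by first exhibiting a nontrivial character on the cyclic subgroup $\langle g \rangle$ and then extending it to all of $G$. Let $n \geq 2$ be the order of $g$ and set $\zeta = e^{2 \pi i / n}$, a primitive $n$th root of unity. The map $\chi_{0} : \langle g \rangle \to \mathbb{C}^{\times}$ defined by $\chi_{0}(g^{k}) = \zeta^{k}$ is a well-defined character of $\langle g \rangle$ with $\chi_{0}(g) = \zeta \neq 1$.

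Next, I would extend $\chi_{0}$ to $G$ by induction on the index $[G : \langle g \rangle]$. Assume $\chi_{0}$ has been extended to a character $\tilde{\chi}$ on an intermediate subgroup $K$ with $\langle g \rangle \subseteq K \subsetneq G$. Pick $a \in G \setminus K$ and let $m$ be the smallest positive integer with $a^{m} \in K$; then $\langle K, a \rangle$ is the disjoint union of the cosets $a^{j} K$ for $0 \leq j < m$. Choose any $m$th root $\omega \in \mathbb{C}^{\times}$ of $\tilde{\chi}(a^{m})$ and define $\tilde{\chi}(a^{j} k) := \omega^{j} \tilde{\chi}(k)$ for $k \in K$ and $0 \leq j < m$. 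Iterating finitely many times yields the desired character $\chi \in \widehat{G}$ with $\chi(g) = \zeta \neq 1$.

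The only (minor) obstacle is checking that the extension step produces a genuine homomorphism, i.e.\ that $\tilde{\chi}$ is well-defined on the cosets and respects multiplication. Both follow from the commutativity of $G$: disjointness of the cosets uses the minimality of $m$, while multiplicativity reduces to the identity $\omega^{j + j'} \tilde{\chi}(k k') = \omega^{j} \tilde{\chi}(k) \cdot \omega^{j'} \tilde{\chi}(k')$, where the case $j + j' \geq m$ is absorbed correctly because $\omega^{m} = \tilde{\chi}(a^{m})$. As an alternative route, one could invoke the classification of finite abelian groups, write $G \cong \prod_{i} \mathbb{Z} / n_{i} \mathbb{Z}$, pick a coordinate in which $g$ is nonzero, and let $\chi$ be the character raising that coordinate to a primitive root of unity and acting trivially on the others.
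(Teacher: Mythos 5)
Your main argument is correct, but it takes a genuinely different route from the paper. The paper invokes the structure theorem for finite abelian groups, writes $G \cong \mathbb{Z}/m_{1}\mathbb{Z} \times \cdots \times \mathbb{Z}/m_{r}\mathbb{Z}$, picks a coordinate where the image of $g$ is nonzero, and sends that coordinate to a primitive root of unity --- i.e.\ exactly the ``alternative route'' you sketch in your last sentence. Your primary argument instead builds a nontrivial character on $\langle g \rangle$ and extends it step by step to $G$, using at each step that $\mathbb{C}^{\times}$ contains $m$th roots of every element; this is the standard proof that characters of a subgroup of a finite abelian group extend to the whole group (equivalently, that $\mathbb{C}^{\times}$ is divisible, hence injective as a $\mathbb{Z}$-module). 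Your verification of well-definedness and multiplicativity in the extension step is the right thing to check and is handled correctly, including the wrap-around case $j + j' \geq m$ via $\omega^{m} = \tilde{\chi}(a^{m})$. What each approach buys: the paper's proof is shorter once the structure theorem is granted; yours avoids the classification entirely and proves the stronger, reusable fact that any character of any subgroup extends to $G$, at the cost of a slightly longer inductive construction. Both are complete proofs of the lemma.
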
 
\begin{proof}
From the structure theorem for finite abelian groups, 
there exist cyclic groups $\mathbb{Z}/ m_{i} \mathbb{Z} \: (1 \leq i \leq r)$ and a group isomorphism 
$$
f : G \rightarrow \mathbb{Z}/m_{1} \mathbb{Z} \times \mathbb{Z}/m_{2} \mathbb{Z} \times \cdots \times \mathbb{Z}/m_{r} \mathbb{Z}. 
$$
Therefore, for all $g \in G$, there exists $\overline{a_{i}} \in \mathbb{Z}/ m_{i} \mathbb{Z}$ such that 
$$
f(g) = (\overline{a_{1}}, \overline{a_{2}}, \ldots, \overline{a_{r}}). 
$$
For all $x_{i} \in \mathbb{N} \: (1 \leq i \leq r)$ where we assume that $0 \in \mathbb{N}$, 
we define the map $\chi : G \rightarrow \mathbb{C}^{\times}$ by 
$$
\chi(g) = \xi_{1}^{x_{1} a_{1}} \xi_{2}^{x_{2} a_{2}} \cdots \xi_{r}^{x_{r} a_{r}} 
$$
where $\xi_{i}$ is a primitive $m_{i}$-th root of unity($1 \leq i \leq r$). 
Then, the map $\chi$ is a degree one representation of $G$. 
Since $g$ is not the unit element, there exists $i \neq 0$ such that $a_{i} \neq 0$. 
Let $x_{i} = 1$ and $x_{j} = 0 \: (1 \leq i \neq j \leq r)$. 
Then, $\chi$ is a degree one representation of $G$ such that $\chi(g) \neq 1$. 
This completes the proof. 
\end{proof}

\begin{lem}\label{lem:3.1.3}
Let $G$ be a finite group and $H$ a normal subgroup of $G$ such that $G/H$ is an abelian group. 
If $g \not\in H$, there exists $\chi \in \overline{G}_{H}$ such that $\chi(g) \neq 1$. 
\end{lem}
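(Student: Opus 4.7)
The plan is to reduce Lemma~\ref{lem:3.1.3} to Lemma~\ref{lem:3.1.2} by passing to the quotient $G/H$ and then transporting a suitable character back to $G$ via Lemma~\ref{lem:3.1.1}.

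First I would observe that since $g \notin H$, the coset $\pi(g) = gH$ is not the identity element of $G/H$, where $\pi \colon G \to G/H$ is the natural projection. Because $G/H$ is a finite abelian group by hypothesis, Lemma~\ref{lem:3.1.2} applies to the quotient and supplies a character $\varphi \in \widehat{G/H}$ such that $\varphi(gH) \neq 1$.

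Next I would appeal to Lemma~\ref{lem:3.1.1}, which identifies $\overline{G}_{H}$ with $\{\varphi \circ \pi \mid \varphi \in \widehat{G/H}\}$. Setting $\chi := \varphi \circ \pi$, this $\chi$ lies in $\overline{G}_{H}$, and
\[
\chi(g) = \varphi(\pi(g)) = \varphi(gH) \neq 1,
\]
which is exactly the required conclusion.

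There is essentially no obstacle here: the two previous lemmas have been set up precisely so that the abelian case (Lemma~\ref{lem:3.1.2}) lifts to the quotient setting. The only mild point to keep in mind is that Lemma~\ref{lem:3.1.1} requires $H$ to be normal with abelian quotient, but both hypotheses are given, so the argument goes through directly.
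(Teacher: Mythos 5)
Your argument is correct and is essentially identical to the paper's proof: both apply Lemma~\ref{lem:3.1.2} to the nontrivial element $gH$ of the finite abelian quotient $G/H$ to get $\varphi$ with $\varphi(gH)\neq 1$, and then pull it back to $\chi=\varphi\circ\pi\in\overline{G}_{H}$ via Lemma~\ref{lem:3.1.1}. No gaps; nothing further to add.
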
 
\begin{proof}
From Lemma~$\ref{lem:3.1.2}$, there exists $\varphi \in \widehat{G/H}$ such that $\varphi(gH) \neq 1$ where $g \not\in H$. 
Let $\pi : G \rightarrow G/H$ be the natural projection. 
By Lemma~$\ref{lem:3.1.1},\ \chi = \varphi \circ \pi \in \overline{G}_{H}$. 
This completes the proof. 
\end{proof}

\subsection{Operators on group algebras}
In this subsection, we define operators on group algebras that are used in the proof of the main theorem.

\begin{definition}\label{def:3.2.1}
Let $G$ be a finite group and $\chi \in \overline{G}$. 
We define the map $T_{\chi} : \mathbb{C}[x_{g}] G \rightarrow \mathbb{C}[x_{g}] G$ by 
$$
T_{\chi} \left( \sum_{g \in G} A_{g} g \right) = \sum_{g \in G} \chi(g) A_{g} g
$$
where $A_{g} \in \mathbb{C}[x_{g}]$. 
\end{definition}

Let $\chi, \chi' \in \overline{G}$ and $\alpha, \beta \in \mathbb{C}[x_{g}] G$. 
It is easy to see that $T_{\chi} \circ T_{\chi'} = T_{\chi \circ \chi'}$ and $T_{\chi}(\alpha \beta) = T_{\chi}(\alpha) T_{\chi}(\beta)$, 
where $\left( \chi \circ \chi' \right)(g) = \chi(g) \chi'(g)$.

We give a necessary and sufficient condition for $T_{\chi}$-invariance for all $\chi \in \overline{G}_{H}$.

\begin{lem}\label{lem:3.2.2}
Let $G$ be a finite group, $H$ a normal subgroup of $G$ such that $G/H$ is an abelian group and $\alpha \in \mathbb{C}[x_{g}] G$. 
For all $\chi \in \overline{G}_{H},\ T_{\chi}(\alpha) = \alpha$ if and only if $\alpha \in \mathbb{C}[x_{g}] H$. 
\end{lem}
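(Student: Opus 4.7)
The plan is to unpack the definition of $T_\chi$ coordinatewise and then handle each direction by a single observation, using Lemma~$\ref{lem:3.1.3}$ for the nontrivial direction.

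Write $\alpha = \sum_{g \in G} A_g g$ with $A_g \in \mathbb{C}[x_g]$. By definition,
$$
T_\chi(\alpha) = \sum_{g \in G} \chi(g) A_g\, g,
$$
so, since the elements of $G$ are linearly independent in $\mathbb{C}[x_g]G$, the condition $T_\chi(\alpha) = \alpha$ is equivalent to the family of scalar identities $(\chi(g) - 1) A_g = 0$ for every $g \in G$. All the work reduces to exploiting this equivalence.

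For the ``if'' direction, I would assume $\alpha \in \mathbb{C}[x_g]H$, i.e.\ $A_g = 0$ whenever $g \notin H$. Then for any $\chi \in \overline{G}_H$ and any $g \in G$, either $g \notin H$ (so $A_g = 0$ and the identity $\chi(g)A_g = A_g$ is trivial) or $g \in H$ (so $\chi(g) = 1$ by definition of $\overline{G}_H$, and the identity is again trivial). Hence $T_\chi(\alpha) = \alpha$.

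For the ``only if'' direction, suppose $T_\chi(\alpha) = \alpha$ for every $\chi \in \overline{G}_H$. I need to show $A_g = 0$ whenever $g \notin H$. Fix such a $g$. Since $G/H$ is abelian, Lemma~$\ref{lem:3.1.3}$ provides a character $\chi \in \overline{G}_H$ with $\chi(g) \neq 1$. The coordinate identity $(\chi(g)-1) A_g = 0$, together with the fact that $\chi(g) - 1$ is a nonzero complex scalar, forces $A_g = 0$ in the integral domain $\mathbb{C}[x_g]$. This completes the proof.

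There is no real obstacle here: the content is entirely packaged in Lemma~$\ref{lem:3.1.3}$, which supplies the separating character needed to kill the off-$H$ coefficients; the rest is just reading off the definition of $T_\chi$. The only minor point to state cleanly is the reduction to coordinate identities, which relies on $\{g : g \in G\}$ forming a $\mathbb{C}[x_g]$-basis of $\mathbb{C}[x_g]G$.
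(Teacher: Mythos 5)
Your proof is correct and follows essentially the same route as the paper's: reduce to the coordinate identities $(\chi(g)-1)A_g=0$ via the $\mathbb{C}[x_g]$-basis $\{g\}$, note the forward direction is trivial since $\chi|_H=1$, and invoke Lemma~\ref{lem:3.1.3} to produce a character separating any $g\notin H$ and thus force $A_g=0$. No differences worth noting.
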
 
\begin{proof}
Let $\alpha \in \mathbb{C}[x_{g}] H$. 
Obviously, $T_{\chi}(\alpha) = \alpha$ for all $\chi \in \overline{G}_{H}$. 
Let $\alpha = \sum_{g \in G} A_{g} g$. 
If $T_{\chi}(\alpha) = \alpha$ for all $\chi \in \overline{G}_{H}$, 
then we have $\chi(g)A_{g}g = A_{g} g$ for all $g \in G$. 
From this condition and Lemma~$\ref{lem:3.1.3}$, if $g \not\in H$, there exists $\chi \in \overline{G}_{H}$ such that $\chi(g) \neq 1$. 
Therefore, $A_{g} = 0$. 
Namely, $\alpha = \sum_{h \in H} A_{h} h$. 
This completes the proof. 
\end{proof}

Let $G$ be a finite group, $S$ a subgroup of $\widehat{G}$ and $S|_{H}$ the set of restrictions of $\chi \in S$ on $H$.

\begin{lem}\label{lem:3.2.3}
Let $G$ be a finite abelian group, 
$H$ a subgroup of $G$, 
and $\widehat{G} = \chi_{1} \widehat{G}_{H} \sqcup \chi_{2} \widehat{G}_{H} \sqcup \cdots \sqcup \chi_{k} \widehat{G}_{H}$. 
Then we have $k = |H|$ and $\widehat{H} = \{ \chi_{1}, \chi_{2}, \ldots, \chi_{k} \}|_{H}$. 
\end{lem}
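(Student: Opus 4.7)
The plan is to prove the two assertions by a direct counting argument using the restriction homomorphism $\rho : \widehat{G} \to \widehat{H}$, $\chi \mapsto \chi|_H$, and observing that its kernel is precisely $\widehat{G}_H$.

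First, I would compute $|\widehat{G}_H|$. Since $G$ is abelian, $H$ is normal and $G/H$ is an abelian group, so Lemma~$\ref{lem:3.1.1}$ applies and gives $\widehat{G}_H = \{\varphi \circ \pi \mid \varphi \in \widehat{G/H}\}$. The map $\varphi \mapsto \varphi \circ \pi$ is injective because $\pi$ is surjective, so $|\widehat{G}_H| = |\widehat{G/H}|$. Since $G/H$ is a finite abelian group, $|\widehat{G/H}| = |G/H| = [G:H]$. Again using that $G$ is finite abelian, $|\widehat{G}| = |G|$, hence
$$
k = [\widehat{G} : \widehat{G}_H] = \frac{|\widehat{G}|}{|\widehat{G}_H|} = \frac{|G|}{[G:H]} = |H|.
$$

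Second, I would show that the restrictions $\chi_1|_H, \chi_2|_H, \ldots, \chi_k|_H$ are pairwise distinct. If $\chi_i|_H = \chi_j|_H$, then $\chi_i \chi_j^{-1}$ is trivial on $H$, so $\chi_i \chi_j^{-1} \in \widehat{G}_H$, forcing $\chi_i \widehat{G}_H = \chi_j \widehat{G}_H$ and hence $i = j$. Thus $\{\chi_1|_H, \ldots, \chi_k|_H\}$ is a subset of $\widehat{H}$ with exactly $k = |H|$ elements. Since $H$ is a finite abelian group, $|\widehat{H}| = |H| = k$, so the set of restrictions must equal all of $\widehat{H}$.

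I do not expect any substantial obstacle: the argument is essentially bookkeeping once Lemma~$\ref{lem:3.1.1}$ is in place. The only delicate point is being careful that the cosets $\chi_i \widehat{G}_H$ correspond bijectively to the elements of $\widehat{H}$, and the counting bypasses the need to directly invoke the extension result ``every character of $H$ extends to $G$,'' which follows a posteriori from the surjectivity of $\rho$.
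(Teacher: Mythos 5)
Your proof is correct and follows essentially the same route as the paper: counting $|\widehat{G}_{H}| = |\widehat{G/H}| = [G:H]$ via Lemma~\ref{lem:3.1.1} to get $k = |H|$, and then showing the restrictions $\chi_i|_H$ are pairwise distinct (since $\chi_i\chi_j^{-1} \in \widehat{G}_H$ would contradict the coset decomposition) so that they exhaust $\widehat{H}$ by cardinality. The framing in terms of the kernel of the restriction homomorphism is a minor stylistic difference, not a different argument.
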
 
\begin{proof}
First, we show that $k = |H|$. 
From $|G| = | \widehat{G} | = k | \widehat{G}_{H} |$ and Lemma~\ref{lem:3.1.1}, we have $|\widehat{G}_{H}| = |\widehat{G/H}| = \frac{|G|}{|H|}$. 
Therefore, $k = |H|$. 
Next, we show that $\widehat{H} = \{ \chi_{1}, \chi_{2}, \ldots, \chi_{k} \}|_{H}$. 
Since the restriction of elements of $\widehat{G}_{H}$ is the trivial representation on $H$, 
$\widehat{G}|_{H} = \{ \chi_{1}, \chi_{2}, \ldots, \chi_{k} \}|_{H} \subset \widehat{H}$. 
From $| \widehat{H} | = \left| H \right|$, we can show that $\chi_{1}, \chi_{2}, \ldots, \chi_{k}$ are different on $H$. 
If $\chi_{i}(h) = \chi_{j}(h) \: (1 \leq i \neq j \leq k)$ for all $h \in H,\ \left( \chi_{i}^{-1} \circ \chi_{j} \right) (h) = 1$. 
Therefore, $\chi_{i}^{-1} \circ \chi_{j} \in \widehat{G}_{H}$. 
This is a contradiction for the left $\widehat{G}_{H}$-coset decomposition of $\widehat{G}$. 
Namely, we have $\chi_{i} \neq \chi_{j}$. 
This completes the proof. 
\end{proof}

\subsection{An extension and a generalization of Dedekind's theorem}

In this subsection, we give the extension and generalization of Dedekind's theorem.

\begin{lem}\label{lem:3.3.1}
Let $G$ be a finite abelian group, $e$ the unit element of $G$, and $H$ a subgroup of $G$. 
For every $h \in H$, there exists a homogeneous polynomial $A_{h} \in \mathbb{C}[x_{g}]$ such that $\deg{A_{h}} = [G : H]$ and 
$$
\prod_{\chi \in \widehat{G}_{H}} \sum_{g \in G} \chi(g) x_{g} g = \sum_{h \in H} A_{h} h
$$
If $H = G$, we can take $A_{h} = x_{h}$ for each $h \in H$. 
\end{lem}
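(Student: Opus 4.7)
The plan is to set $\alpha := \prod_{\chi \in \widehat{G}_{H}} \sum_{g \in G} \chi(g) x_{g} g \in \mathbb{C}[x_{g}]G$ and argue that $\alpha$ already lies in $\mathbb{C}[x_{g}]H$. For this I would invoke Lemma~$\ref{lem:3.2.2}$: since $G$ is abelian, every subgroup is normal and $G/H$ is abelian, so it suffices to show $T_{\chi}(\alpha) = \alpha$ for every $\chi \in \widehat{G}_{H}$.

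To verify this invariance, I would use two properties of $T_{\chi}$ that are recorded just after Definition~$\ref{def:3.2.1}$: namely that $T_{\chi}$ is multiplicative, and that $T_{\chi} \circ T_{\chi'} = T_{\chi \circ \chi'}$. Applied to a single factor, $T_{\chi}\bigl(\sum_{g \in G} \chi'(g) x_{g} g\bigr) = \sum_{g \in G} (\chi \circ \chi')(g) x_{g} g$, so $T_{\chi}(\alpha) = \prod_{\chi' \in \widehat{G}_{H}} \sum_{g \in G} (\chi \circ \chi')(g) x_{g} g$. Because $\widehat{G}_{H}$ is a subgroup of $\widehat{G}$ and $\chi \in \widehat{G}_{H}$, the map $\chi' \mapsto \chi \circ \chi'$ permutes $\widehat{G}_{H}$, so this product equals $\alpha$. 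Hence $T_{\chi}(\alpha) = \alpha$, and Lemma~$\ref{lem:3.2.2}$ forces $\alpha = \sum_{h \in H} A_{h} h$ for some $A_{h} \in \mathbb{C}[x_{g}]$.

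Next I would check the degree claim. Each factor $\sum_{g \in G} \chi(g) x_{g} g$ has, as coefficient of every group element, a polynomial that is homogeneous of degree $1$ in the variables $\{x_{g}\}$. When $|\widehat{G}_{H}| = [G:H]$ such factors are multiplied out, every coefficient of a group element in the product becomes a sum of products of $[G:H]$ linear terms, hence is homogeneous of degree $[G:H]$ (or zero, which is considered homogeneous of any degree). Thus each $A_{h}$ is homogeneous of degree $[G:H]$, possibly after absorbing zero coefficients. Lemma~$\ref{lem:3.2.3}$ (noting $|\widehat{G}_{H}| = |G|/|H| = [G:H]$) confirms the cardinality used here.

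Finally, the special case $H = G$: then $\widehat{G}_{H} = \{\chi_{1}\}$ consists of only the trivial character, so the product reduces to a single factor $\sum_{g \in G} x_{g} g = \sum_{h \in H} x_{h} h$, giving $A_{h} = x_{h}$. The main obstacle in this proof is essentially bookkeeping: recognizing that the product is permuted by $T_{\chi}$ for $\chi \in \widehat{G}_{H}$ so that Lemma~$\ref{lem:3.2.2}$ applies. Everything else is a routine degree count.
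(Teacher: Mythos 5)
Your proposal is correct and follows essentially the same route as the paper: you show the product is fixed by $T_{\chi}$ for every $\chi \in \widehat{G}_{H}$ via the permutation $\chi' \mapsto \chi \circ \chi'$ of $\widehat{G}_{H}$, invoke Lemma~\ref{lem:3.2.2} to conclude membership in $\mathbb{C}[x_{g}]H$, count degrees from the $[G:H]$ linear factors, and note that $\widehat{G}_{G}$ is trivial for the case $H = G$. Your degree count is slightly more explicit than the paper's one-line ``Clearly,'' but the argument is the same.
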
 
\begin{proof}
For all $\chi' \in \widehat{G}_{H}$, 
\begin{align*}
T_{\chi'} \left( \prod_{\chi \in \widehat{G}_{H}} \sum_{g \in G} \chi(g) x_{g} g \right) 
&= \prod_{\chi \in \widehat{G}_{H}} \sum_{g \in G} \left( \chi' \circ \chi \right) (g) x_{g} g \\ 
&= \prod_{\chi \in \widehat{G}_{H}} \sum_{g \in G} \chi(g) x_{g} g. 
\end{align*}
From Lemma~\ref{lem:3.2.2}, we have $\prod_{\chi \in \widehat{G}_{H}} \sum_{g \in G} \chi(g) x_{g} g \in \mathbb{C}[x_{g}] H$. 
Clearly, $\deg{A_{h}} = |\widehat{G}_{H}| = \left[G : H \right]$. 
If $H = G,\ \widehat{G}_{H}$ is the trivial group. 
This completes the proof. 
\end{proof}

\begin{definition}\label{def:3.3.2}
Let $F : \mathbb{C}[x_{g}] G \rightarrow \mathbb{C}[x_{g}]$ be the $\mathbb{C}[x_{g}]$-algebra homomorphism such that $F(g) = 1$ for all $g \in G$. 
We call the map F the fundamental $\mathbb{C}[x_{g}]G$-function. 
\end{definition}

Now, we give factorizations of the group determinant for any given finite abelian group in the group algebra of subgroups. 
The factorizations are the extension of Dedekind's theorem.

\begin{thm}\label{thm:3.3.3}
Let $G$ be a finite abelian group, $e$ the unit element of $G$, and $H$ a subgroup of $G$. 
For every $h \in H$, there exists a homogeneous polynomial $A_{h} \in \mathbb{C}[x_{g}]$ such that $\deg{A_{h}} = \left[G : H \right]$ and 
$$
\Theta(G) e = \prod_{\chi \in \widehat{H}} \sum_{h \in H} \chi(h) A_{h} h. 
$$
If $H = G$, we can take $A_{h} = x_{h}$ for each $h \in H$. 
\end{thm}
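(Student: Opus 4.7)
The plan is to first establish the scalar identity
$$\prod_{\chi \in \widehat{G}} \sum_{g \in G} \chi(g) x_{g} g = \Theta(G) e$$
in $\mathbb{C}[x_{g}] G$, and then slice the product along the cosets of $\widehat{G}_{H}$ in $\widehat{G}$ to bring Lemma~\ref{lem:3.3.1} into play. The first identity is essentially the subgroup case $H = \{e\}$ of the theorem and forms the backbone of the argument.

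For the first step, I would note that for every $\chi_{0} \in \widehat{G}$ the map $\chi \mapsto \chi_{0} \circ \chi$ is a bijection of $\widehat{G}$, so
$$T_{\chi_{0}}\left(\prod_{\chi \in \widehat{G}} \sum_{g \in G} \chi(g) x_{g} g\right) = \prod_{\chi \in \widehat{G}} \sum_{g \in G} (\chi_{0} \circ \chi)(g) x_{g} g = \prod_{\chi \in \widehat{G}} \sum_{g \in G} \chi(g) x_{g} g,$$
using the multiplicativity of $T_{\chi_{0}}$ noted after Definition~\ref{def:3.2.1}. Applying Lemma~\ref{lem:3.2.2} with $H$ replaced by $\{e\}$ (so that $\widehat{G}_{\{e\}} = \widehat{G}$) forces the product to lie in $\mathbb{C}[x_{g}] \{e\}$, i.e.\ to have the form $P \cdot e$ for some $P \in \mathbb{C}[x_{g}]$. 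Applying the fundamental function $F$ of Definition~\ref{def:3.3.2} to both sides and invoking Dedekind's theorem (Theorem~\ref{thm:1.1.1}) identifies $P$ as $\Theta(G)$.

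For the second step, Lemma~\ref{lem:3.2.3} yields a coset decomposition $\widehat{G} = \chi_{1} \widehat{G}_{H} \sqcup \cdots \sqcup \chi_{k} \widehat{G}_{H}$ with $k = |H|$ and $\widehat{H} = \{\chi_{1}|_{H}, \ldots, \chi_{k}|_{H}\}$. Grouping the factors according to these cosets and factoring each $\chi_{i}$ out via $T_{\chi_{i}}$ gives
$$\Theta(G) e = \prod_{i=1}^{k} T_{\chi_{i}}\left(\prod_{\chi \in \widehat{G}_{H}} \sum_{g \in G} \chi(g) x_{g} g\right).$$
Lemma~\ref{lem:3.3.1} then rewrites the inner factor as $\sum_{h \in H} A_{h} h$ with $A_{h}$ homogeneous of degree $[G:H]$, and $T_{\chi_{i}}$ multiplies each coefficient by $\chi_{i}(h)$. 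The identification of characters supplied by Lemma~\ref{lem:3.2.3} reindexes the product over $i$ as a product over $\chi \in \widehat{H}$, yielding the claimed formula. The case $H = G$ is immediate, since $\widehat{G}_{G}$ is trivial and Lemma~\ref{lem:3.3.1} forces $A_{h} = x_{h}$.

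I expect the most subtle point to be the first step: recognizing that $T_{\chi}$-invariance across all of $\widehat{G}$ collapses an element of $\mathbb{C}[x_{g}] G$ onto the identity component, so that a computation purely in the polynomial ring (namely Dedekind's theorem) can pin down its value. Once this localization mechanism is in place, the remainder is a direct combinatorial repackaging through Lemmas~\ref{lem:3.3.1} and~\ref{lem:3.2.3}.
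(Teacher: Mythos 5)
Your proposal is correct and follows essentially the same route as the paper: establish $\prod_{\chi \in \widehat{G}} \sum_{g} \chi(g) x_{g} g = \Theta(G)e$ via $T_{\chi}$-invariance, Lemma~\ref{lem:3.2.2} with $H=\{e\}$, and the fundamental function together with Dedekind's theorem, then regroup the product along the cosets of $\widehat{G}_{H}$ using Lemmas~\ref{lem:3.2.3} and~\ref{lem:3.3.1}. No gaps; this matches the paper's argument step for step.
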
 
\begin{proof}
Clearly, 
$$
T_{\chi} \left( \prod_{\chi \in \widehat{G}} \sum_{g \in G} \chi(g) x_{g} g \right) = \prod_{\chi \in \widehat{G}} \sum_{g \in G} \chi(g) x_{g} g 
$$
for all $\chi \in \widehat{G}$. From this, $\widehat{G} = \widehat{G}_{\{ e \}}$ and Lemma~$\ref{lem:3.2.2}$, there exists $C \in \mathbb{C}[x_{g}]$ such that 
\begin{align*}
\prod_{\chi \in \widehat{G}} \sum_{g \in G} \chi(g) x_{g} g 
&= \prod_{\chi \in \widehat{G}_{\{ e \}}} \sum_{g \in G} \chi(g) x_{g} g \\ 
&= C e. 
\end{align*}
Let $F$ be the fundamental $\mathbb{C}[x_{g}]G$-function. 
By applying $F$ to this equation and Theorem~$\ref{thm:2.1.3}$, 
we have $C = \Theta(G)$. 
Namely, we have 
$$
\prod_{\chi \in \widehat{G}} \sum_{g \in G} \chi(g) x_{g} g = \Theta(G) e. 
$$
Let $\widehat{G} = \chi_{1} \widehat{G}_{H} \sqcup \chi_{2} \widehat{G}_{H} \sqcup \cdots \sqcup \chi_{k} \widehat{G}_{H}$. 
Then we have 
\begin{align*}
\prod_{\chi \in \widehat{G}} \sum_{g \in G} \chi(g) x_{g} g 
&= \prod_{i = 1}^{k} \prod_{\chi \in \chi_{i} \widehat{G}_{H}} \sum_{g \in G} \chi(g) x_{g} g \\ 
&= \prod_{i = 1}^{k} T_{\chi_{i}} \left( \prod_{\chi \in \widehat{G}_{H}} \sum_{g \in G} \chi(g) x_{g} g \right). 
\end{align*}
There exists a homogeneous polynomial $A_{h} \in \mathbb{C}[x_{g}]$ for each $h \in H$ such that 
\begin{align*}
\prod_{i = 1}^{k} T_{\chi_{i}} \left( \prod_{\chi \in \widehat{G}_{H}} \sum_{g \in G} \chi(g) x_{g} g \right) 
&= \prod_{i = 1}^{k} T_{\chi_{i} \vert_{H}} \left( \sum_{h \in H} A_{h} h \right) \\ 
&= \prod_{\chi \in \widehat{H}} \sum_{h \in H} \chi(h) A_{h} h 
\end{align*}
from Lemma~$\ref{lem:3.2.3}$ and $\ref{lem:3.3.1}$. 
This completes the proof. 
\end{proof}

As a corollary, we obtain the following formula for inverse elements in the group algebra $\mathbb{C}G$ from Theorem~$\ref{thm:1.1.3}$.  
However, only now the situation is that $x_{g}$ is a complex number for any $g \in G$. 
Hence, we assume that $\sum_{g \in G} x_{g} g \in \mathbb{C}G$ and $\Theta(G) = \det{(x_{gh^{-1}})_{g, h \in G}} \in \mathbb{C}$. 

\begin{cor}\label{cor:1.1.5}
Let $G$ be a finite abelian group, $\chi_{1}$ the trivial representation of $G$, and $\sum_{g \in G} x_{g}g \in \mathbb{C}G$ such that $\Theta(G) \neq 0$. 
Then we have 
$$
\left( \sum_{g \in G} x_{g} g \right)^{-1} = \frac{1}{\Theta(G)} \prod_{\chi \in \widehat{G} \setminus \{ \chi_{1} \} } \left( \sum_{g \in G} \chi (g) x_{g} g \right). 
$$
\end{cor}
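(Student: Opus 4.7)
The plan is to extract the corollary directly from the proof of Theorem~\ref{thm:3.3.3} (equivalently Theorem~\ref{thm:1.1.3}) applied with $H = \{e\}$, and then specialize the variables $x_g$ to the given complex numbers.

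First, I would recall that inside the proof of Theorem~\ref{thm:3.3.3} the identity
$$
\prod_{\chi \in \widehat{G}} \sum_{g \in G} \chi(g) x_{g} g = \Theta(G)\, e
$$
is established in $\mathbb{C}[x_{g}] G$. Since every term in this polynomial identity is a polynomial identity in the variables $\{x_g\}_{g \in G}$, it remains valid after substituting any complex values for the $x_g$. Thus the same equation holds in $\mathbb{C}G$ under the hypotheses of the corollary, where now $\Theta(G) \in \mathbb{C}$ is the numerical value $\det(x_{gh^{-1}})_{g,h \in G}$.

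Next I would isolate the factor corresponding to the trivial character $\chi_{1}$. Since $\chi_{1}(g) = 1$ for every $g \in G$, this factor equals $\sum_{g \in G} x_{g} g$. Therefore
$$
\left( \sum_{g \in G} x_{g} g \right) \prod_{\chi \in \widehat{G} \setminus \{\chi_{1}\}} \left( \sum_{g \in G} \chi(g) x_{g} g \right) = \Theta(G)\, e.
$$
Because $G$ is abelian, $\mathbb{C}G$ is commutative, so the factor $\sum_{g \in G} x_{g} g$ can be placed on either side of the product; hence the product on the right (after dividing by the nonzero scalar $\Theta(G)$) is simultaneously a left and right inverse of $\sum_{g \in G} x_{g} g$.

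Finally, under the assumption $\Theta(G) \neq 0$, I divide both sides by $\Theta(G) \in \mathbb{C}^{\times}$ and rewrite the resulting equation as the formula claimed in the corollary. There is no real obstacle here beyond observing that the polynomial identity specializes and that the abelian hypothesis legitimizes reading the displayed product as a two-sided inverse; both points are essentially formal once Theorem~\ref{thm:3.3.3} is in hand.
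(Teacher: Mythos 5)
Your proposal is correct and matches the paper's (implicit) argument: the corollary is exactly the identity $\prod_{\chi \in \widehat{G}} \sum_{g \in G} \chi(g) x_{g} g = \Theta(G)\, e$ from the proof of Theorem~\ref{thm:3.3.3} (equivalently, Theorem~\ref{thm:1.1.3} with $H = G$ and $A_{h} = x_{h}$), specialized to complex values, with the trivial-character factor peeled off and the nonzero scalar $\Theta(G)$ divided out. Your added remarks about specializing the polynomial identity and about commutativity giving a two-sided inverse are correct and harmless.
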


We give factorizations of the group determinant for any given finite abelian group. 
The factorizations are the generalization of Dedekind's theorem.

\begin{thm}\label{thm:3.3.5}
Let $G$ be a finite abelian group and $H$ a subgroup of $G$. 
For every $h \in H$, there exists a homogeneous polynomial $A_{h} \in \mathbb{C}[x_{g}]$ such that $\deg{A_{h}} = \left[G : H \right]$ and 
$$
\Theta(G) = \prod_{\chi \in \widehat{H}} \sum_{h \in H} \chi(h) A_{h}. 
$$
If $H = G$, we can take $A_{h} = x_{h}$ for each $h \in H$. 
\end{thm}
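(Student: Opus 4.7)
The plan is to deduce Theorem~$\ref{thm:3.3.5}$ directly from Theorem~$\ref{thm:3.3.3}$ by applying the fundamental $\mathbb{C}[x_{g}]G$-function $F$ from Definition~$\ref{def:3.3.2}$. By Theorem~$\ref{thm:3.3.3}$, for every $h \in H$ there exists a homogeneous polynomial $A_{h} \in \mathbb{C}[x_{g}]$ with $\deg A_{h} = [G:H]$ such that
$$
\Theta(G) e = \prod_{\chi \in \widehat{H}} \sum_{h \in H} \chi(h) A_{h} h
$$
holds in $\mathbb{C}[x_{g}] H$, and moreover when $H = G$ we may choose $A_{h} = x_{h}$. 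These are exactly the polynomials and degree conditions required by Theorem~$\ref{thm:3.3.5}$, so the only remaining task is to pass from the identity in $\mathbb{C}[x_{g}] H$ (which lives in the group algebra) to an identity in $\mathbb{C}[x_{g}]$ (which lives in the polynomial ring).

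To do this, I would apply $F$ to both sides of the displayed identity. Since $F$ is a $\mathbb{C}[x_{g}]$-algebra homomorphism with $F(g) = 1$ for every $g \in G$, the left-hand side becomes $F(\Theta(G) e) = \Theta(G) \cdot 1 = \Theta(G)$. On the right-hand side, since $F$ is multiplicative and $\mathbb{C}[x_{g}]$-linear,
$$
F\!\left( \prod_{\chi \in \widehat{H}} \sum_{h \in H} \chi(h) A_{h} h \right) = \prod_{\chi \in \widehat{H}} \sum_{h \in H} \chi(h) A_{h} F(h) = \prod_{\chi \in \widehat{H}} \sum_{h \in H} \chi(h) A_{h}.
$$
Combining the two computations yields the claimed factorization.

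There is essentially no obstacle here: all nontrivial content has been absorbed into Theorem~$\ref{thm:3.3.3}$, and the only subtlety is making sure that $F$ is genuinely a ring homomorphism so that it commutes with the product over $\chi \in \widehat{H}$. This is immediate from Definition~$\ref{def:3.3.2}$. The final sentence of the theorem (``If $H = G$, we can take $A_{h} = x_{h}$'') is inherited verbatim from the corresponding statement in Theorem~$\ref{thm:3.3.3}$, since $F$ does not alter the polynomials $A_{h}$.
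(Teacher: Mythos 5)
Your proposal is correct and is precisely the paper's own argument: the paper likewise deduces Theorem~\ref{thm:3.3.5} from Theorem~\ref{thm:3.3.3} by applying the fundamental $\mathbb{C}[x_{g}]G$-function $F$, merely stating this in one line where you have spelled out the computation. Nothing is missing.
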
 
\begin{proof}
From Theorem~$\ref{thm:3.3.3}$ and the fundamental $\mathbb{C}[x_{g}]G$-function, we have 
$$
\Theta(G) = \prod_{\chi \in \widehat{H}} \sum_{h \in H} \chi(h) A_{h}. 
$$
This completes the proof. 
\end{proof}

\clearpage

\thanks{\bf{Acknowledgments}}
I am deeply grateful to Prof. Hiroyuki Ochiai and Prof. Minoru Itoh who provided the helpful comments and suggestions. 
Also, I would like to thank my colleagues in the Graduate School of Mathematics of Kyushu University, 
in particular Cid Reyes, Tomoyuki Tamura and Yuka Suzuki for comments and suggestions. 
I would also like to express my gratitude to my family for their moral support and warm encouragements. 
This work was supported by a grant from the Japan Society for the Promotion of Science (JSPS KAKENHI Grant Number 15J06842).

\medskip
\begin{flushleft}
Naoya Yamaguchi\\
Graduate School of Mathematics\\
Kyushu University\\
Nishi-ku, Fukuoka 819-0395 \\
Japan\\
n-yamaguchi@math.kyushu-u.ac.jp
\end{flushleft}

\end{document}